\theoremstyle{plain}
\newtheorem{theorem}{Theorem}
\newtheorem*{theorem*}{Theorem}
\newtheorem*{proposition*}{Proposition}
\newtheorem*{corollary*}{Corollary}
\newtheorem{lemma}[theorem]{Lemma}
\newtheorem*{lemma*}{Lemma}
\newtheorem*{observation*}{Observation}
\newtheorem*{conjecture*}{Conjecture}
\newtheorem*{question*}{Question}
\newtheorem*{questions*}{Questions}
\newtheorem*{problem*}{Problem}
\newtheorem*{problems*}{Problems}
\newtheorem*{openproblem*}{Open Problem}
\theoremstyle{definition}
\newtheorem*{definition*}{Definition}
\newtheorem*{example*}{Example}
\newtheorem*{exercise*}{Exercise}
\theoremstyle{remark}
\newtheorem*{remark*}{Remark}
\newtheorem*{remarks*}{Remarks}
\newtheorem*{claim*}{Claim}
\newcommand{\subclass}[1]{}
\newcommand{\enumTi}[1]{\renewcommand{\theenumi}{#1}}
\newcommand{\alphenumi}{\enumTi{\alph{enumi}}}
\newcommand{\romenumi}{\enumTi{\roman{enumi}}}
\newlength{\hspaceforlengthglumpf}
\newcommand{\comment}[1]{\text{\footnotesize[#1]}}
\newcommand{\lt}{\left}
\newcommand{\rt}{\right}
\newcommand{\floor}[1]{\lt\lfloor{#1}\rt\rfloor}
\newcommand{\nfrac}[2]{{\nicefrac{#1}{#2}}}
\newcommand{\NN}{\mathbb{N}}
\newcommand{\ZZ}{\mathbb{Z}}
\newlength{\algotabbingwidth}
\newcounter{mysaveenumi}
\newcommand{\myparagraphwskip}[1]{\smallskip\paragraph{#1}}
\newcommand{\mypar}{\par\medskip\noindent}
\DeclareMathOperator{\dD}{d\mspace{-2mu}d}
\DeclareMathOperator{\dyck}{p\mspace{-2mu}d}
\DeclareMathOperator{\RHS}{RHS}
\begin{document}
\title[1-Ascents in dispersed Dyck paths]{Short Note on the Number of 1-Ascents in Dispersed Dyck Paths}%
\author{Kairi Kangro}%
\author{Mozhgan Pourmoradnasseri}%
\thanks{MP is recipient of the Estonian IT Academy Scholarship, and supported by the Estonian Research Council, ETAG (\textit{Eesti Teadusagentuur}).}
\author{Dirk Oliver Theis}%
\thanks{DOT is supported by the Estonian Research Council through PUT Exploratory Grant \#620; and by the European Regional Development Fund through the Estonian Center of Excellence in Computer Science, EXCS\mbox{}.}
\address{Dirk Oliver Theis\\
  University of Tartu\\
  Insitute of Computer Science\\
  J.~Liivi~2\\
  50409 Tartu\\
  Estonia.}%
\urladdr{http://ac.cs.ut.ee/people/dot/}%
\email{dotheis@ut.ee}%

\begin{abstract}
  A dispersed Dyck path (DDP) of length~$n$ is a lattice path on $\NN\times\NN$ from $(0,0)$ to $(n,0)$ in which the following steps are allowed: ``up'' $(x,y)\to(x+1,y+1)$; ``down'' $(x,y)\to(x+1,y-1)$; and  ``right'' $(x,0)\to(x+1,0)$.  An ascent in a DDP is an inclusion-wise maximal sequence of consecutive up steps.  A 1-ascent is an ascent consisting of exactly 1 up step.\\
  We give a closed formula for the total number of 1-ascents in all dispersed Dyck paths of length~$n$, \#A191386 in Sloane's OEIS.  Previously, only implicit generating function relations and asymptotics were known.\\
  \textbf{Keywords: Lattice path statistics, Dyck paths}
\end{abstract}

\date{Mon Dec 14 03:58:18 EST 2015}

\maketitle

\section{Introduction}\label{sec:intro}
A \textit{Dyck path of length~$n$} is a lattice path on $\NN\times\NN$ from $(0,0)$ to $(n,0)$ in which the following steps are allowed:
\begin{itemize}
\item[$(+1,+1)$:] up step  $(x,y)\to(x+1,y+1)$;
\item[$(+1,-1)$:] down step $(x,y)\to(x+1,y-1)$, if $y>0$.
\end{itemize}
A \textit{dispersed Dyck path (DDP) of length~$n$} is a lattice path on $\NN\times\NN$ from $(0,0)$ to $(n,0)$ in which the following additional steps are allowed:
\begin{itemize}
\item[$(+1,0)$:] right step $(x,y)\to(x+1,y)$, \textbf{only if $y=0$.}
\end{itemize}
Put differently, dispersed Dyck paths are Motzkin paths with no $(+1,0)$-steps at positive heights.

An \textit{ascent} in a DDP is an inclusion-wise maximal sequence of consecutive up steps;  a \textit{1-ascent} is an ascent consisting of exactly 1 up step.

We give a closed formula for the total number~$A(n)$ of 1-ascents in all dispersed Dyck paths of length~$n$.  This is \href{https://oeis.org/A191386}{\#A191386} in Sloane's Online Encyclopedia of Integer Sequences, OEIS.  Previously, only implicit generating function relations and the following asymptotics were known~\cite{Kotesovec14}:
\begin{equation*}
  A(n) \sim \sqrt{\nfrac{n}{\pi}} \, \lt(1 + \sqrt{\nfrac{\pi}{2n}}\rt) \; 2^{n-5/2}.
\end{equation*}
We prove the following.
\begin{theorem}\label{thm:1asc}
  For $n \ge 1$, the number of 1-ascents in all DDPs of length~$n+2$ is
  \begin{equation*}
    A(n) \ = \;\; 2^{n-1}   +   \frac{n + 1}{2}\; \binom{ n }{ \lfloor \nfrac{n}{2} \rfloor }.
  \end{equation*}
\end{theorem}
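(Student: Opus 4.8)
The plan is to set up the bivariate generating function $F(x,t)$ in which $x$ marks length and $t$ marks 1-ascents of dispersed Dyck paths, to note that $A(n) = [x^{n+2}]\,\partial_t F(x,1)$, and to compute $\partial_t F(x,1)$ directly by decomposition. Let $C = C(x)$ be the generating function of Dyck paths by length, so $C = 1 + x^2C^2$, and $M = M(x)$ that of DDPs; the decomposition of a DDP into a sequence of $R$-steps and arches $U\,W\,D$ (with $W$ a Dyck path) gives $M = 1 + xM + x^2CM$, hence $M = \frac{C}{1-xC}$. A last-passage decomposition, combined with the length-preserving reversal involution on DDPs, shows that both the prefixes of DDPs ending at a fixed height $h$ and the suffixes of DDPs starting at height $h$ are counted by $M(xC)^h$.

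Next I would count pairs (DDP, marked 1-ascent). After an up step one can neither place an $R$-step nor stop, so a 1-ascent is a $U$ whose successor is a $D$ and whose predecessor, if any, is not a $U$; marking it gives a factorization $P = L\,U\,D\,R$ where $R$ is a suffix of a DDP starting at some height $h \ge 0$ (generating function $M(xC)^h$) and $L$ is a prefix of a DDP ending at height $h$ which is empty or ends in a $D$- or $R$-step. For $h=0$ every prefix ending at height $0$ is a DDP and cannot end in $U$, so it contributes $M$; for $h\ge1$ the admissible prefixes have generating function $M(xC)^h - xM(xC)^{h-1} = xM(C-1)(xC)^{h-1}$. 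Summing over $h$ and multiplying by $x^2$ for the $U\,D$ in the middle, a short computation using $x^2C^2 = C-1$, $1-x^2C^2 = 2-C$, and $C(2-C)+(C-1)^2 = 1$ collapses the sum to
\begin{equation*}
  \sum_{P}\#\{\text{1-ascents of }P\}\,x^{|P|} \;=\; \frac{x^2M^2}{C(2-C)} \;=\; \frac{C-1}{C(2-C)(1-xC)^2}.
\end{equation*}

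I would then simplify this. With $s := \sqrt{1-4x^2}$ and $C = \frac{1-s}{2x^2}$ one has $\frac1C = \frac{1+s}{2}$, $2-C = \frac{s(1-s)}{2x^2}$, $C-1 = \frac{(1-s)^2}{4x^2}$, and $(1-xC)^{-2} = \frac{1+s}{2(1-2x)}$; this last identity is the one mildly awkward step and follows from $1-xC = \frac{2x-1+s}{2x}$ after clearing the radical. Multiplying these gives
\begin{equation*}
  \sum_{P}\#\{\text{1-ascents of }P\}\,x^{|P|} \;=\; \frac{x^2(1+s)}{2s(1-2x)} \;=\; \frac{x^2}{2(1-2x)} + \frac{x^2}{2(1-2x)\sqrt{1-4x^2}},
\end{equation*}
whose coefficient of $x^{n+2}$ is $2^{n-1} + \frac12\,[x^n]\frac{1}{(1-2x)\sqrt{1-4x^2}}$.

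It remains to prove $[x^n]\frac{1}{(1-2x)\sqrt{1-4x^2}} = (n+1)\binom{n}{\lfloor n/2\rfloor}$. Since $[x^n]\frac{1}{\sqrt{1-4x^2}}$ is $\binom{n}{n/2}$ for $n$ even and $0$ otherwise, this is the identity $\sum_{j=0}^{\lfloor n/2\rfloor} 2^{n-2j}\binom{2j}{j} = (n+1)\binom{n}{\lfloor n/2\rfloor}$, which I would settle by a short induction on $n$ (each step uses only $\binom{2m+2}{m+1} = \frac{2(2m+1)}{m+1}\binom{2m}{m}$); alternatively, split $\sum_n (n+1)\binom{n}{\lfloor n/2\rfloor}x^n$ into even and odd parts, recognise them as $(1-4x^2)^{-3/2}$ and $2x(1-4x^2)^{-3/2}$, and note their sum is $(1+2x)(1-4x^2)^{-3/2} = \frac{1}{(1-2x)\sqrt{1-4x^2}}$. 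I expect the only real obstacle to be keeping the generating-function bookkeeping tidy — in particular handling the ``$L$ does not end in a $U$-step'' condition (and the exceptional height $h=0$) without a clumsy case analysis; once the closed form above is in hand, the rest is routine coefficient extraction.
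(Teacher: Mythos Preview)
Your proof is correct and takes a genuinely different route from the paper's. Both arguments begin with the same observation --- a 1-ascent is a $U$ followed by $D$ and not preceded by $U$, so one may delete the $UD$ pair --- but they diverge immediately afterwards. The paper turns this into the counting identity $A(n+2)=\dD(n)+D(n)+R(n)$ (its Lemma~\ref{lem:1ascent-formula}), then spends the bulk of the work obtaining the closed form $R(n)=2^n-\binom{n}{\lfloor n/2\rfloor}$ via two parity recursions $R(2k)=2R(2k-1)$ and $U(2k+1)=2U(2k)$, each proved by a hand-crafted bijection (Lemmas~\ref{lem:right_right-recursion}--\ref{lem:right-closed}), and finishes with the bookkeeping identity $n\,\dD(n)=R(n)+2D(n)$. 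You instead keep track of the height~$h$ at which the marked 1-ascent sits, sum the geometric series in $xC$, and let the Catalan relation $x^2C^2=C-1$ collapse everything to $\dfrac{x^2(1+s)}{2s(1-2x)}$; the only residual work is the coefficient identity $\sum_{j\le\lfloor n/2\rfloor}2^{n-2j}\binom{2j}{j}=(n+1)\binom{n}{\lfloor n/2\rfloor}$, which you handle cleanly. Your approach is more mechanical and visibly adapts to related statistics (e.g.\ $k$-ascents, by replacing the $UD$ block with $U^kD$ and shifting the suffix height), at the price of some radical algebra; the paper's approach stays entirely within lattice-path bijections and gives combinatorial meaning to the intermediate quantities $R(n)$ and $D(n)$, but the bijection behind $U(2k+1)=2U(2k)$ in particular requires some ingenuity. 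The authors themselves note (in the Conclusion) that they deliberately preferred the bijective argument over a convolution identity for $R(n)$, which is exactly the kind of identity your generating-function computation produces along the way.
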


\section{Proof of the theorem}\label{sec:proof}
Denote by $\dD(n)$ the number of DDPs of length~$n$.  We repeat the following easy, folklore, fact for the sake of completion.
\begin{lemma}[Folklore]\label{lem:total_dD}
  For $n \ge 0$,
  \begin{equation*}
    \dD(n) = \binom{ n }{ \lfloor \nfrac{n}{2} \rfloor }
  \end{equation*}
\end{lemma}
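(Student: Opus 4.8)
The plan is to prove the formula bijectively, by matching dispersed Dyck paths of length~$n$ with the more familiar class of \emph{nonnegative $\pm 1$-paths} of length~$n$ — lattice paths on $\NN\times\NN$ from $(0,0)$ with steps $(+1,+1)$ and $(+1,-1)$ only (no right steps), that is, left factors (prefixes) of Dyck paths — and then counting that class by a reflection argument.

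First I would record the structural decomposition of a DDP. Since every right step of a DDP sits at height~$0$, cutting a DDP of length~$n$ at each of its, say, $h$ right steps produces an ordered tuple $(d_0,d_1,\dots,d_h)$ of $h+1$ (possibly empty) ordinary Dyck paths with $n = h + \sum_i \abs{d_i}$; conversely every such tuple reassembles uniquely into a DDP (concatenating Dyck paths stays a Dyck path, right steps only occur at level~$0$). So $\dD(n)$ equals the number of such tuples.

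Next I would transfer a tuple to a lattice path by ``last-passage'' stacking: lay $d_0$ on the $x$-axis, follow it by one up step, then $d_1$ raised to height~$1$, another up step, $d_2$ raised to height~$2$, and so on, finishing with $d_h$ at height~$h$. This yields a nonnegative $\pm1$-path of length~$n$ ending at height~$h$. The inverse is the genuine last-passage map: in such a path the step immediately after the \emph{last} visit to level~$i$ (for $0\le i\le h-1$) must be an up step — otherwise the path, which ends at height $h>i$, would have to cross level~$i$ again — and these $h$ up steps occur in increasing order and cut the path into $h+1$ blocks which, lowered to the $x$-axis, are precisely $d_0,\dots,d_h$. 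Verifying that these two maps are mutually inverse (in particular that the stacked path's last-passage up steps are exactly the ones we inserted, and handling empty $d_i$'s and the parity bookkeeping correctly) is the one place that needs a little care; I expect that to be the main, though minor, obstacle. The upshot is $\dD(n) = \#\{\text{nonnegative }\pm1\text{-paths of length } n\}$.

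Finally I would count the latter by reflection. A $\pm1$-path of length~$n$ ending at height~$k$ exists only for $k\equiv n\pmod 2$, and there are $\binom{n}{(n+k)/2}$ of them without the nonnegativity constraint; reflecting the initial segment up to the first visit to level~$-1$ across the line $y=-1$ shows that exactly $\binom{n}{(n+k)/2+1}$ of them dip below the axis, leaving $\binom{n}{(n+k)/2}-\binom{n}{(n+k)/2+1}$ admissible ones. Summing over all $k\ge 0$ of the correct parity and writing $j=(n+k)/2$, the sum telescopes:
\[
  \sum_{j=\lceil n/2\rceil}^{n}\lt(\binom{n}{j}-\binom{n}{j+1}\rt) \;=\; \binom{n}{\lceil n/2\rceil} \;=\; \binom{n}{\floor{n/2}},
\]
which is the asserted value of $\dD(n)$. (Alternatively one can bypass the bijection with generating functions: a DDP is a sequence of right steps and elevated Dyck paths, so $\sum_n \dD(n)x^n = (1-x-x^2C(x))^{-1}$ with $C$ the Catalan generating function; simplifying this algebraic function and extracting coefficients gives the same answer. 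I find the bijective route cleaner to present, and it is the one I would write up.)
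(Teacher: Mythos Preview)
Your argument is correct, but it takes a genuinely different route from the paper's. The paper bijects DDPs of length~$n$ directly with \emph{all} $\pm1$-paths from $(0,0)$ to $(n,-(n\bmod 2))$ in $\ZZ\times\ZZ$: each excursion of such a path below the $x$-axis is reflected upward (about $y=-\tfrac12$), and its two boundary steps become right steps. That target set has size $\binom{n}{\lfloor n/2\rfloor}$ by inspection, so the count is immediate. You instead biject DDPs with \emph{nonnegative} $\pm1$-paths of length~$n$ (Dyck prefixes) via the last-passage decomposition, and then must still enumerate those, which costs you a second reflection argument and a telescoping sum. Both routes are standard; the paper's is shorter because the reflection is folded into the bijection rather than deferred to the enumeration. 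On the other hand, your bijection carries extra structure: it converts each right step into an up step, so the number of right steps in a DDP equals the final height of the corresponding Dyck prefix---a refinement that is not visible in the paper's bijection and could streamline the later lemmas on~$R(n)$.
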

\begin{proof}
  The proof is by a variant of the reflection method: We give a bijection between the set of all DDP of length~$n$ and the set of all paths from $(0,0)$ to $(n, -n \% 2)$ in $\ZZ\times \ZZ$ using only up steps $(+1,+1)$ and down steps $(+1,-1)$.  Here $n\%2$ denotes the remainder upon division by~2.
  We call the latter paths ``plain paths'', PP.

  Indeed, a DDP can be obtained from a PP in the following way.  Wherever~$p$ takes a step $(x,0)\to(x+1,-1)$, let $x'$ be the next occurrence (i.e., smallest $x' > x$) of a step $(x',-1)\to(x'+1,0)$, if such a step exists, or $x':=n$ otherwise.  We the reflect the PP in the $x$-axis interval $[x+1,x']$ at the line $y=\nfrac12$, and replace the down step $(x,0)\to(x+1,-1)$ by a right step $(x,0)\to(x+1,0)$.  If $x'<n$, we also replace the up step $(x',-1)\to(x'+1,0)$ by a right step $(x',0)\to(x'+1,0)$.

  This is performed for all parts of the PP which venture below the $x$-axis.  We leave to the reader the details of verifying that this indeed defines a bijection between the PPs and the DDPs.
\end{proof}

We will use the following sequences.  Denote\footnote{We reserve upper-case letters for ``total number of \dots in \dots'' counters.} %
by
\begin{equation*}
  \lt.
  \begin{array}[c]{c}
    U(n) \\
    D(n) \\
    R(n) \\
    A(n) \\
  \end{array}
  \rt\}
  \text{ the total number of }
  \lt\{
  \begin{array}[c]{l}
    \text{up steps}\\
    \text{down steps}\\
    \text{right steps}\\
    \text{1-ascents}\\
  \end{array}
  \rt.
\end{equation*}
in all DDPs of length~$n$.  Note that $U(n)=D(n)$.
Moreover, let $\dyck(n)$ denote the number of (proper) Dyck paths of length~$n$ (which is~0, if $n$ is odd).

The next lemma states that the total number of right steps in all DDPs of length $2k$ is twice the total number of right steps in all DDPs of length $2k-1$.
\begin{lemma}\label{lem:right_right-recursion}
  For $k\ge 1$, $\displaystyle R(2k) = 2R(2k-1)$.
\end{lemma}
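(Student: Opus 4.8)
The plan is to express the total right‑step count $R(n)$ as a convolution of the path‑count sequence $\dD$, pin down the generating function $G(x):=\sum_{n\ge 0}\dD(n)x^n$ using Lemma~\ref{lem:total_dD}, and then simply compare the two sides of the claimed identity.

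First I would observe that in any dispersed Dyck path the $i$-th step is a right step precisely when the path has height $0$ both immediately before and immediately after it: from height $0$ the only admissible step returning to height $0$ is a right step. Hence the DDPs of length $n$ whose $i$-th step is a right step are exactly the concatenations of a DDP of length $i-1$, that right step, and a DDP of length $n-i$, so there are $\dD(i-1)\dD(n-i)$ of them. Summing over $i=1,\dots,n$ gives
\[
  R(n)=\sum_{a+b=n-1}\dD(a)\dD(b),\qquad\text{equivalently}\qquad \sum_{n\ge 0}R(n)x^n=x\,G(x)^2 .
\]

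Next I would compute $G$. By Lemma~\ref{lem:total_dD} its even part is $\sum_m\binom{2m}{m}x^{2m}=(1-4x^2)^{-1/2}$, and since $\binom{2m+1}{m}=\tfrac12\binom{2m+2}{m+1}$ its odd part is $\tfrac1{2x}\bigl((1-4x^2)^{-1/2}-1\bigr)$; together these give $2xG(x)+1=(1+2x)(1-4x^2)^{-1/2}$, hence $\bigl(2xG(x)+1\bigr)^2=\frac{1+2x}{1-2x}$. Multiplying out and cancelling the common factor $4x$ (valid in the integral domain of formal power series) this rearranges to the clean relation $x\,G(x)^2+G(x)=\frac1{1-2x}$. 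Comparing with the display above yields $R(n)=2^n-\dD(n)=2^n-\binom{n}{\lfloor n/2\rfloor}$, from which the lemma is immediate: $R(2k)=2^{2k}-\binom{2k}{k}$ while $2R(2k-1)=2^{2k}-2\binom{2k-1}{k-1}=2^{2k}-\binom{2k}{k}$, using $\binom{2k-1}{k-1}=\binom{2k-1}{k}$ and Pascal's rule.

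I expect the one genuinely fiddly point to be the middle step — extracting the algebraic identity for $G$ from Lemma~\ref{lem:total_dD} and keeping the even/odd bookkeeping straight. If one prefers to avoid generating functions, this step can be replaced by splitting the convolution $R(n)=\sum_{a+b=n-1}\dD(a)\dD(b)$ according to the parities of $a$ and $b$ and applying the classical identity $\sum_{i+j=m}\binom{2i}{i}\binom{2j}{j}=4^m$ to each piece; one obtains $R(2k)=4^k-\binom{2k}{k}$ and $R(2k-1)=2\cdot 4^{k-1}-\tfrac12\binom{2k}{k}$ directly, and the parity split is again where the care is needed. Either route also delivers the closed form $R(n)=2^n-\binom{n}{\lfloor n/2\rfloor}$ as a by‑product, which is presumably wanted later anyway.
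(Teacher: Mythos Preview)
Your proof is correct, but it takes a genuinely different route from the paper's. The paper argues purely combinatorially: it notes that right steps in a DDP of even length~$2k$ come in pairs, identifies DDPs of length~$2k-1$ with those DDPs of length~$2k$ that end in a right step, and then shows that for each admissible pair of positions $(x,x')$ with $x'<2k-1$ the count of such pairs in all length-$2k$ DDPs is exactly twice the count in those ending with a right step, via the identity $\binom{\ell}{\ell/2}=2\binom{\ell-1}{\ell/2-1}$ applied to the tail segment. No generating functions and no closed form for $R(n)$ are used at this stage.

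Your approach instead proves the closed form $R(n)=2^n-\binom{n}{\lfloor n/2\rfloor}$ outright (via the convolution $R(n)=\sum_{a+b=n-1}\dD(a)\dD(b)$ and the algebraic identity $xG(x)^2+G(x)=(1-2x)^{-1}$), and the lemma drops out as an immediate corollary. This is more than is needed here: that closed form is the content of Lemma~\ref{lem:right-closed}, which the paper only reaches after also proving Lemma~\ref{lem:up_up-recursion}. So your argument effectively short-circuits the paper's Lemmas~\ref{lem:right_right-recursion}--\ref{lem:right-closed} in one stroke. The paper's authors explicitly acknowledge this alternative in the conclusion (the convolution identity is essentially their equation~\eqref{eq:A045621}), but state that they chose the lattice-path route deliberately. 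What your approach buys is economy and the closed form up front; what the paper's buys is a self-contained bijective story with no analytic machinery.
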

\begin{proof}
  For $k=1$, the equation holds.  Let $k\ge 2$.

  There are an even number of right steps in a DDP of length~$2k$.  Numbering them from left to right (starting from~0), we count the right steps in pairs
  \begin{equation*}
    \text{ ``($2j$th right step, $(2j+1)$th right step)'' }
  \end{equation*}
  and then multiply by two for the total number.  For every pair of right steps, there are an even number (possibly 0) of steps between
  \begin{enumerate}[(a)]
  \item $0$ and the the start of the first right step in the pair
  \item the end point of the first right step in the pair and the beginning of the second one
  \item end point of the second right step of the pair, and the point $(2k,0)$.
  \end{enumerate}
  Hence, every pair $(x,x')$ with
  \begin{enumerate}[(a)]
  \item $x \ge 0$ and even
  \item $x'-x \ge 1$ and odd
  \item $x' \le 2k$ and $2k-x'-1$ even
  \end{enumerate}
  is a potential pair of right steps $( (x,0)\to(x+1,0), (x',0)\to(x'+1,0))$ in a DDP of length~$2k$.

  We identify the DDPs of length $2k-1$ with the subset of those DDPs of length~$2k$ for which the last step is a right step: $(2k-1,0)\to(2k,0)$.  In this manner, the right steps in the DDPs of length $2k-1$ also come in pairs, but the second right step in the last pair is always $(2k-1,0)\to(2k,0)$.

  For $(x,x')$ satisfying the conditions above, denote by $R_{x,x'}$ the total number of pairs of right steps $( (x,0)\to(x+1,0), (x',0)\to(x'+1,0))$ in all DDPs of length~$2k$, and by $R^-_{x,x'}$ the total number of pairs of right steps $( (x,0)\to(x+1,0), (x',0)\to(x'+1,0))$ in all DDPs of length~$2k$ ending in a right step $(2k-1,0)\to(2k,0)$.

  We have
  \begin{equation*}
    R(2k) = \sum_{x<x'\le n} 2 R_{x,x'}, \quad\text{ and }\quad  R(2k-1) = \sum_{x<x'<n} 2 R^-_{x,x'} + \sum_{x<x'=n} R^-_{x,x'},
  \end{equation*}
  where the sums extend over all $x,x'$ satisfying the conditions (a--c) above.

  We will show that for $x<x'<n$, $R_{x,x'} = 2R^-_{x,x'}$.  Since, $R_{x,2k-1} = R^-_{x,2k-1}$ by definition, this concludes the proof of the lemma.

  The equation $R_{x,x'} = 2R^-_{x,x'}$, for $x' < n$ is proved as follows.  Using the fact that
  \begin{equation}\label{eq:binom-half-even}
    \binom{\ell}{\ell/2} = 2\binom{\ell-1}{\ell/2-1}
  \end{equation}
  we have
  \begin{align*}
    R_{x,x'}
    &= \dD(x)\cdot \dyck(x'-x-1) \cdot \dD(2k-x'-1) \\
    &= \dD(x)\cdot \dyck(x'-x-1) \cdot \binom{2k-x'-1}{\lfloor\frac{2k-x'-1}{2}\rfloor} &&\comment{by Lemma~\ref{lem:total_dD}} \\
    &= \dD(x)\cdot \dyck(x'-x-1) \cdot 2 \binom{2k-x'-2}{\lfloor\frac{2k-x'-2}{2}\rfloor} &&\comment{by \eqref{eq:binom-half-even} since $2k-x'-1$ is even, (c)} \\
    &= 2 R^-_{x,x'}.
  \end{align*}
  This completes the proof of the lemma.
\end{proof}

The next lemma states that the total number of up steps (or down steps, since the number of ups and downs has to be equal) in all DDPs of length $2k+1$ is twice the total number of up steps in all DDPs of length $2k$.
\begin{lemma}\label{lem:up_up-recursion}
  For $k\ge 1$, $\displaystyle U(2k+1) = 2 U(2k)$.
\end{lemma}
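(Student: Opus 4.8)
The plan is to mimic the pairing strategy used in the proof of Lemma~\ref{lem:right_right-recursion}, but now tracking up steps rather than right steps, and identifying DDPs of length~$2k$ with a subfamily of DDPs of length~$2k+1$ via appending a step. Concretely, I would decompose a DDP of length~$2k+1$ according to the position of a distinguished up step and what happens around it. Every up step in a DDP starts at some point $(x,y)$ with $x$ even in parity-compatible position, goes to $(x+1,y+1)$, and is preceded by a sub-DDP on the $x$-axis and followed by the portion of the path that must eventually return. I would classify an up step by the $x$-coordinate $x$ where it begins and the $x$-coordinate $x'>x$ of the matching down step that first returns the path to level~$y$; between them sits a (shifted) Dyck path of length $x'-x-1-1$ starting and ending at level $y+1$, i.e.\ an elevated structure, and after $x'$ sits another sub-DDP on the axis. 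This gives a factorization of the form
\begin{equation*}
  U(2k+1) \;=\; \sum 2\cdot \dD(a)\cdot \dyck(b) \cdot \dD(c),
\end{equation*}
where the sum ranges over decompositions $a + b + c = 2k+1$ with $b = x'-x-1$ (the elevated Dyck portion has length $x'-x-2$, which forces the right parity), $a = x$ even, $c = 2k+1 - x' $, and the factor~$2$ arises exactly as in Lemma~\ref{lem:right_right-recursion} — but one needs to be careful about which of the three blocks is the one whose length is even and to which \eqref{eq:binom-half-even} applies.

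The key step is then to compare this sum to the analogous sum for $U(2k)$: identify a DDP of length~$2k$ with a DDP of length $2k+1$ whose last step is forced (here the parity mismatch means we cannot simply append a right step as before — a DDP of length $2k+1$ ends at $(2k+1,0)$ coming from $(2k,-1)$ or $(2k,1)$; the clean identification is with DDPs of length $2k+1$ whose last step is an up step is impossible since that leaves the path above the axis, so instead I expect the right move is to append a \emph{down} step preceded by an up step, or more simply to look at the \emph{first} return to~$0$). After setting up the correct correspondence, the identity $U(2k+1)=2U(2k)$ should reduce, block by block, to the single combinatorial fact \eqref{eq:binom-half-even}, exactly as in the previous lemma: in each term of the length-$(2k+1)$ sum exactly one of $\dD(a),\dD(c)$ has even argument where the length-$2k$ version has that argument reduced by one, and $\binom{\ell}{\ell/2}=2\binom{\ell-1}{\ell/2-1}$ supplies the factor of two.

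The main obstacle I anticipate is the parity bookkeeping and choosing the right identification of length-$2k$ DDPs inside length-$(2k+1)$ DDPs: unlike the even case of Lemma~\ref{lem:right_right-recursion}, where appending a right step is the natural injection, here the parity of the endpoint changes, so the bijective scaffolding is less symmetric. One likely clean route is to distinguish an up step by whether the sub-DDP \emph{before} it is empty or not (equivalently, whether this up step is the path's first step, or is preceded by a return to the axis), peel off the leftmost axis-block $\dD(a)$, and induct on the remaining structure; then the factor-of-two comes from the fact that the leftmost maximal axis-block of a DDP of odd length $m$ has $\dD$-count $\binom{m-\cdots}{\cdots}$ which is twice the corresponding even-length count. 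I would organize the proof so that, after the decomposition, the only nontrivial input is \eqref{eq:binom-half-even}, keeping the argument parallel to Lemma~\ref{lem:right_right-recursion}; verifying the base case $k=1$ directly ($U(3)=2U(2)$) is routine and done first.
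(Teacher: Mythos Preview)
Your plan has a structural gap. The reason the factorization in Lemma~\ref{lem:right_right-recursion} works is that right steps occur \emph{only at height~$0$}, so a marked right step splits a DDP into a DDP prefix and a DDP suffix. An up step can begin at any height $y\ge 0$; when $y>0$ the portion of the path from $(0,0)$ to $(x,y)$ is \emph{not} a DDP, and neither is the portion from $(x'+1,y)$ to $(n,0)$ after your ``matching'' down step. Hence your displayed factorization $\sum \dD(a)\cdot\dyck(b)\cdot\dD(c)$ is simply not a valid expression for $U(2k+1)$, and there is no term-by-term comparison to which~\eqref{eq:binom-half-even} could be applied. (A repair is possible: mark instead the whole \emph{arch} containing the up step, i.e.\ the excursion between the last axis visit before it and the first axis visit after. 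This gives $U(n)=\sum_{j\ge 1} jC_{j-1}\sum_{a+c=n-2j}\dD(a)\dD(c)$, and the inner convolution is exactly $R(n-2j+1)$; then Lemma~\ref{lem:right_right-recursion} itself, not~\eqref{eq:binom-half-even} termwise, furnishes the factor~$2$. But that is not the argument you outlined.)

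There is also a slip in your discussion of the injection: a DDP of length $2k+1$ can end with a right step $(2k,0)\to(2k+1,0)$; your list ``coming from $(2k,-1)$ or $(2k,1)$'' forgets this possibility (and $(2k,-1)$ is impossible anyway, since DDPs stay in $\NN\times\NN$). Appending a right step \emph{is} the natural injection of length-$2k$ DDPs into length-$(2k+1)$ DDPs, and the paper uses exactly this identification. The substantive work is then to show that the up steps carried by the \emph{remaining} length-$(2k+1)$ DDPs --- those whose last step is a down step --- again total $U(2k)$. The paper does this not by a convolution but by a bijection (in such a path, replace the last up step that leaves the axis by a right step and delete the final down step, landing on the DDPs of length $2k$ that contain at least one right step), together with a separate Catalan count that matches the up steps coming from proper Dyck paths of length~$2k$ against the contribution of the final down steps on the odd side. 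Neither ingredient appears in your plan.
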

\begin{proof}
  Again we identify DDPs of length $2k$ with the subset of those DDPs of length $2k+1$ whose last step is a right step $(2k,0)\to(2k+1,0)$.  Denote by $U^-$ the total number of up steps in all DDPs of length $2k+1$ whose last step is a right step $(2k,0)\to(2k+1,0)$, and note that $U^- = U(2k)$.  What remains to be shown is that the total number of up steps in all those DDPs of length $2k+1$ whose last step is a down step $(2k,1)\to(2k+1,0)$ is equal to $U(2k)$.

  There are two types of up steps in the DDPs of length $2k$:
  \begin{enumerate}[(a)]
  \item\label{enum:upup:a} those which come from proper Dyck paths (DDPs which only contain up and down steps); and
  \item\label{enum:upup:b} those which come from DDPs which contain a right step.
  \end{enumerate}
  Similarly, there are two types of down steps in those DDPs of length $2k+1$, whose last step is a down step: \setcounter{mysaveenumi}{\theenumi}
  \begin{enumerate}[(a)]\setcounter{enumi}{\themysaveenumi}
  \item\label{enum:upup:aplus} the last down step $(2k,1)\to(2k+1,0)$; and
  \item\label{enum:upup:bplus} all the others.
  \end{enumerate}
  As for (\ref{enum:upup:a}), each of these paths contains exactly~$k$ up steps, so their total number is $k\, c(2k)$, where $c(2k) := \frac{1}{k}\binom{2k}{k-1}$ is the $k$th Catalan number.  Hence, their number is $\binom{2k}{k-1}$.
  As for (\ref{enum:upup:aplus}), their number is coincides with the number of DDPs of length $2k+1$ ending in a down step, which equals the total number of DDPs of length $2k+1$ \textit{minus} the number of DDPs of length $2k+1$ ending in a right step.  The last number is just the number of DDPs of length~$2k$, so, by invoking Lemma~\ref{lem:total_dD}, we obtain, for the down steps in~(\ref{enum:upup:aplus}),
  \begin{equation*}
    \binom{2k+1}{k} - \binom{2k}{k} = \binom{2k}{k-1}.
  \end{equation*}
  Thus, the numbers in (\ref{enum:upup:a}) and (\ref{enum:upup:aplus}) are equal.

  \mypar%
  As for (\ref{enum:upup:b}) and (\ref{enum:upup:bplus}), we define a bijection between the type-(\ref{enum:upup:bplus}) down steps and the type-(\ref{enum:upup:b}) up steps.

  For a DDP of length $2k+1$ whose last step is a down step $(2k,1)\to(2k+1,0)$, find the greatest~$x$ for which $(x,0)\to(x+1,1)$ is a step in the DDP.  Then replace the up step $(x,0)\to(x+1,1)$ by a right step $(x,0)\to(x+1,0)$, and delete the final (down) step.  The resuls is a DDP of length $2k$ which as at least~1 right step.  For the inverse, for a DDP of length $2k$ which has at least~1 right step, take the greatest~$x$ for which $(x,0)\to(x+1,0)$ is a right step.  Replace this right step by an up step $(x,0)\to(x+1,1)$, and add a tailing down step $(2k,1)\to(2k+1,0)$, to obtain a DDP of length $2k+1$.

  It is easy to verify that this operation defines a bijection taking the down steps in the longer DDPs onto the down steps in the $2k$-DDPs, and hence between the sets defining (\ref{enum:upup:b}) and (\ref{enum:upup:bplus}).
\end{proof}

Combining the previous two lemmas, we obtain the following expression for the total number of right steps:
\begin{lemma}\label{lem:right-closed}
  For $n\ge 1$, we have
  \begin{equation}\label{eq:right-closed}
    R(n) = 2^n -\binom{n}{\lfloor \nfrac{n}{2} \rfloor}
  \end{equation}
\end{lemma}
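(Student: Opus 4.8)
The plan is to prove the formula by strong induction on $n$, combining the two recursions already at hand with one elementary observation. Since every DDP of length~$n$ consists of exactly $n$ steps, each of which is an up, down, or right step, summing over all $\dD(n)$ paths of length~$n$ gives $U(n)+D(n)+R(n) = n\,\dD(n)$; using $U(n)=D(n)$ and Lemma~\ref{lem:total_dD} this becomes the \emph{step-count identity}
\begin{equation*}
  2U(n) + R(n) = n\binom{n}{\lfloor n/2\rfloor}.\tag{$\ast$}
\end{equation*}
This will be used to eliminate $U$ every time it appears. The base case $n=1$ is immediate: the only DDP of length~$1$ is a single right step, so $R(1)=1=2^1-\binom{1}{0}$.

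For the even step, assume the formula holds at $n=2k-1$ for some $k\ge 1$. By Lemma~\ref{lem:right_right-recursion} and the instance $\binom{2k}{k}=2\binom{2k-1}{k-1}$ of~\eqref{eq:binom-half-even},
\begin{equation*}
  R(2k) = 2R(2k-1) = 2\Bigl(2^{2k-1}-\binom{2k-1}{k-1}\Bigr) = 2^{2k}-\binom{2k}{k},
\end{equation*}
which is the asserted value since $\lfloor 2k/2\rfloor=k$.

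For the odd step, assume the formula holds at $n=2k$ for some $k\ge 1$. From $(\ast)$ at length~$2k$ we get $U(2k)=\tfrac12\bigl(2k\binom{2k}{k}-R(2k)\bigr)$, so Lemma~\ref{lem:up_up-recursion} yields $U(2k+1)=2U(2k)=2k\binom{2k}{k}-R(2k)$. Plugging this into $(\ast)$ at length~$2k+1$ and then substituting $R(2k)=2^{2k}-\binom{2k}{k}$ gives
\begin{equation*}
  R(2k+1) = (2k+1)\binom{2k+1}{k} - 2U(2k+1) = (2k+1)\binom{2k+1}{k} - (4k+2)\binom{2k}{k} + 2^{2k+1}.
\end{equation*}
Since $\binom{2k+1}{k}=\tfrac{2k+1}{k+1}\binom{2k}{k}$, a short computation shows that the first two terms sum to $-\binom{2k+1}{k}$, whence $R(2k+1)=2^{2k+1}-\binom{2k+1}{k}=2^{2k+1}-\binom{2k+1}{\lfloor(2k+1)/2\rfloor}$, completing the induction.

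I do not expect a real obstacle here: the recursions do all of the work. The only points to watch are the binomial bookkeeping in the odd step --- checking that $(2k+1)\binom{2k+1}{k}-(4k+2)\binom{2k}{k}$ really does collapse to exactly $-\binom{2k+1}{k}$ --- and that Lemmas~\ref{lem:right_right-recursion} and~\ref{lem:up_up-recursion} and the identity $(\ast)$ are only invoked at indices in their valid ranges ($k\ge1$), which the induction starting from $n=1$ respects.
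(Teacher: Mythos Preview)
Your proof is correct and follows essentially the same route as the paper: both arguments combine Lemma~\ref{lem:right_right-recursion} (for the parity transition odd~$\to$~even) with Lemma~\ref{lem:up_up-recursion} plus the step-count identity $2U(n)+R(n)=n\,\dD(n)$ (for even~$\to$~odd), and then check the closed form against the resulting recursions. The only cosmetic difference is that the paper verifies two base cases and shows $R(\cdot)$ and the right-hand side satisfy the same pair of recursions, whereas you run a direct alternating induction from the single base $n=1$; your binomial simplification $(2k+1)\binom{2k+1}{k}-(4k+2)\binom{2k}{k}=-\binom{2k+1}{k}$ is exactly what the paper's final line encodes.
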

\begin{proof}
  The equation is readily verified for $n=1,2$.  We show that both $R(\cdot)$ and $\RHS(\cdot)$ satisfy the same recursive relation.

  From Lemma~\ref{lem:right_right-recursion}, we know that $R(2k+1) = 2 R(2k)$.

  From previous lemma and the fact that
  \begin{equation}\label{eq:total-total-steps}\tag{\textasteriskcentered}
    n\dD(n) = R(n) + U(n) + D(n) = R(n) +2U(n),
  \end{equation}
  we obtain
  \begin{align*}
    R(2k+1)
    &= (2k+1)\dD(2k+1) - 2 U(2k+1) &&\comment{by~\eqref{eq:total-total-steps}}\\
    &= (2k+1)\dD(2k+1) - 4 U(2k)   &&\comment{by Lemma~\ref{lem:up_up-recursion}}\\
    &= (2k+1)\dD(2k+1) - 2 \bigl[ 2k\dD(2k) - R(2k) \bigr] &&\comment{by~\eqref{eq:total-total-steps} again}\\
    &= 2R(2k) + (2k+1)\binom{2k+1}{k} - 4k \binom{2k}{k}. &&\comment{by Lemma~\ref{lem:total_dD}}
  \end{align*}

  Using $(k+1)\binom{2k+1}{k} = (2k+1)\binom{2k}{k}$, we find that the RHS of~\eqref{eq:right-closed} satisfies the equation $\RHS(2k+1) = 2\RHS(2k) + (2k+1)\binom{2k+1}{k} - 4k \binom{2k}{k}$, too.
\end{proof}

As the last ingredient, we prove that the total number of 1-ascents in all DDPs of length $n+2$ equals the number of DDPs of length~$n$, \textit{plus} the total number of down steps in all DDPs of length~$n$, \textit{plus} the total number of right steps in all DDPs of length~$n$.

\begin{lemma}\label{lem:1ascent-formula}
  For $n\ge 1$
  \begin{equation*}
    A(n+2) = \dD(n) + D(n) + R(n).
  \end{equation*}
\end{lemma}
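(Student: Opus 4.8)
The plan is to build a bijection between the set of pairs $(P,\alpha)$, where $P$ is a DDP of length $n+2$ and $\alpha$ is a 1-ascent in $P$, and a set manifestly counted by $\dD(n) + D(n) + R(n)$. First I would record two structural observations about a 1-ascent, i.e.\ a single up step $s=(x,y)\to(x+1,y+1)$ that is an inclusion-wise maximal run of up steps. Since $s$ ends at height $y+1 \ge 1$, the step immediately following $s$ cannot be a right step and cannot be an up step (else the ascent would be longer), so it is forced to be a down step; moreover $s$ is not the last step of $P$ (the path ends at height $0$), so such a following step always exists. On the other side, either $x=0$, so that $s$ is the first step of $P$, or the step immediately preceding $s$ exists and, not being an up step, is a down step or a right step. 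Thus a 1-ascent is exactly an occurrence of the pattern ``$UD$'' whose $U$ is not preceded by a $U$.

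Next I would define the forward map: given $(P,\alpha)$, delete the up step forming $\alpha$ together with the down step that must follow it, and concatenate the two remaining pieces at their common endpoint, which sits at some height $h \ge 0$. This yields a path $P'$ of length $n$, and one checks it is again a DDP: the only thing to verify is legality at the junction, and if $h>0$ no right steps are involved, while if $h=0$ the step arriving at the junction is a down or right step and the step leaving it is an up or right step, all of which is legal (even two consecutive right steps). The data remembering where the deletion happened is: either ``at the very start of $P'$'', or ``immediately after a particular down step of $P'$'', or ``immediately after a particular right step of $P'$''. For the inverse, given $P'$ together with one such ``insertion slot'', insert a $UD$ pair there; the resulting path $P$ of length $n+2$ is a DDP (inserting $U$ then $D$ at a point of height $h$ passes through height $h+1\ge 1$ and returns, which is always legal), and by the first paragraph the inserted $U$ is a genuine 1-ascent whose preceding step — being the start, a down step, or a right step — is not an up step. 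One then verifies the two maps are mutually inverse; in particular, distinct slots, or distinct $P'$, yield distinct pairs $(P,\alpha)$.

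Finally I would count the targets by summing over all DDPs $P'$ of length $n$: the slots ``start of $P'$'' contribute one per path, hence $\dD(n)$; the slots ``after a down step'' contribute one per down step, hence $D(n)$; the slots ``after a right step'' contribute one per right step, hence $R(n)$. Adding these gives $A(n+2) = \dD(n) + D(n) + R(n)$. As a cross-check one may instead observe that the number of insertion slots in a fixed $P'$ is $(n+1)$ minus the number of up steps of $P'$ — one slot for each of the $n+1$ lattice points of $P'$, excluding those immediately after an up step — and sum using the step-count identity \eqref{eq:total-total-steps}, getting $(n+1)\dD(n) - U(n) = \dD(n) + \bigl(U(n)+D(n)+R(n)\bigr) - U(n)$. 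I expect the only mildly delicate part to be the bookkeeping of the boundary cases in the bijection — inserting at the very start of $P'$, the inserted $D$ becoming the last step of $P$, and junctions at height $0$ flanked by right steps — but none of these is a real obstacle.
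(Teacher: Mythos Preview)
Your proposal is correct and follows essentially the same approach as the paper: both arguments observe that a 1-ascent in a DDP of length $n+2$ is necessarily followed by a down step and preceded by the start, a down step, or a right step, then delete the $UD$ pair to obtain a DDP of length $n$ together with a marked ``slot'' of one of these three types, and check that insertion of $UD$ at any such slot is the inverse. Your write-up is more explicit about the legality checks at the junction and adds the $(n+1)\dD(n)-U(n)$ cross-count, but the underlying bijection is identical to the paper's.
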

\begin{proof}
  First of all, a 1-ascent can never be the last step in a DDP, since it has to end on the $x$-axis and never goes below the $x$-axis.  For the same reason, a 1-ascent can never be followed by a right step, since we can only do right steps on the $x$-axis.  Hence, every 1-ascent has to be followed by a descent.  Also, by the definition of a 1-ascent, it has to be preceded by either a descent, a right step, or the beginning of the path.  It is also clear that removing the 1-ascent with its following descent gives us a valid DDP with length shorter by two.

  Hence, we can pair each 1-ascent in a DDP with length $n+2$ with either a beginning, a descent, or a right step of a DDP with length~$n$---the one that preceded the 1-ascent in the DDP with length $n + 2$ before its removal.

  Since adding an ascent-descent pair either in the beginning, after a descent of after a right step in a DDP of length~$n$ always gives us a valid 1-ascent in a DDP of length $n + 2$, we get that $A(n + 2) = R(n) + D(n) + \dD(n)$, as claimed.
\end{proof}

We are now ready to prove Theorem~\ref{thm:1asc}.
\begin{proof}[Proof of Theorem~\ref{thm:1asc}.]
  Here, we use the facts that $\dD(n) = \binom{n}{\lfloor n/2 \rfloor}$, $R(n)= 2^n-\binom{n}{\floor n/2\rfloor}$.

  We will use equation~\eqref{eq:total-total-steps} from the proof of Lemma~\ref{lem:up_up-recursion} again.  Using Lemma~\ref{lem:1ascent-formula}, we have
  \begin{align*}
    A(n+2)
    &= R(n) + D(n) + \dD(n)                                                                                  &&\comment{by Lemma~\ref{lem:1ascent-formula}}\\
    &= R(n) + (n\dD(n)-R(n))/2 + dD(n)                                                                       &&\comment{by~\eqref{eq:total-total-steps}}\\
    &= \frac12 \bigl( R(n) + (n+2) \dD(n) \bigr)                                                             \\
    &= \frac12 \lt( 2^n - \binom{n}{\lfloor n/2 \rfloor} + (n+1)\binom{n}{\lfloor n/2 \rfloor} \rt)          &&\comment{by Lemmas \ref{lem:total_dD} and~\ref{lem:right-closed}}\\
    &= 2^{n-1} + \frac{n+1}{2} \binom{n}{\lfloor n/2 \rfloor}.
  \end{align*}
  This completes the proof of the theorem.
\end{proof}

\section{Conclusion}
An alternative way to prove Lemma~\ref{lem:right-closed} would be to use the equation
\begin{equation}\label{eq:A045621}
  2^n - \binom{n}{\lfloor \nfrac{n}{2} \rfloor}
  =
  \sum_{k=0}^{n-1} \binom{k}{\lfloor \frac{k}{2} \rfloor}  \binom{n-k-1}{\lfloor \frac{n-k-1}{2} \rfloor};
\end{equation}
in OEIS this is sequence \#A045621~\cite{A045621}.  Indeed, it can be shown that the total number of right steps in all DDPs of length~$n$ can be counted as
\begin{equation*}
  R(n) = \sum_{k=0}^{n-1} \binom{k}{\lfloor \frac{k}{2} \rfloor}  \binom{n-k-1}{\lfloor \frac{n-k-1}{2} \rfloor},
\end{equation*}
which, together with~\eqref{eq:A045621} implies equation~\eqref{eq:right-closed} in Lemma~\ref{lem:right-closed}.  Moreover, Lemma~\ref{lem:right-closed} implies \eqref{eq:A045621}.

We chose to present the proof which is based entirely on lattice path arguments.

There are several other statistics about dispersed Dyck paths which are of interest in the field.  It should now be checked whether closed-form expressions can be achieved for other ascent-related statistics, such as the total number of $k$-ascents, for $k>1$, or the number of dispersed Dyck paths of length $n$ with~$t$ ascents of length~1.

\smallskip%
\myparagraphwskip{\bf Acknowledgements}%
The 3rd author would like to thank Sebastian Schindler, a Masters student at the University of Magdeburg, for computationally verifying the equation in Theorem~\ref{thm:1asc} for $n \le 372$ as part of a software project.

The authors would like to thank Emeric Deutsch for his comments on an earlier version of this paper, in particular for pointing out the connection to A045621.

\providecommand{\bysame}{\leavevmode\hbox to3em{\hrulefill}\thinspace}
\providecommand{\MR}{\relax\ifhmode\unskip\space\fi MR }
\providecommand{\MRhref}[2]{%
  \href{http://www.ams.org/mathscinet-getitem?mr=#1}{#2}
}
\providecommand{\href}[2]{#2}

\end{document}